\newtheorem{thm}{Theorem}[section]
\def\norm#1{\hspace{0.2ex} \|#1\| \hspace{0.2ex}}
\newcommand{\R}{\ensuremath{\mathbbm{R}}}
\newcommand{\kommentar}[1]{}
\begin{document}


\title[Monotonicity-based regularization for EIT]{Monotonicity-based regularization for phantom experiment data in Electrical Impedance Tomography}

\author{Bastian Harrach${}^\dag$ and Mach Nguyet Minh${}^\ddag$}
\address{\dag\ Department of Mathematics,
Goethe University Frankfurt, Germany}
\address{\ddag\ Department of Mathematics and Statistics,
University of Helsinki, Finland}
\ead{\mailto{harrach@math.uni-frankfurt.de}, \mailto{minh.mach@helsinki.fi}}

\begin{abstract} In electrical impedance tomography, algorithms based on minimizing the linearized-data-fit residuum have been widely used due to their real-time implementation and satisfactory reconstructed images. However, the resulting images usually tend to contain ringing artifacts. In this work, we shall minimize the linearized-data-fit functional with respect to a linear constraint defined by the monotonicity relation in the framework of real electrode setting. Numerical results of standard phantom experiment data confirm that this new algorithm improves the quality of the reconstructed images as well as reduce the ringing artifacts.  
\end{abstract}


\ams{
35R30, 
35J25 
}



\section{Introduction}
\label{Sec:intro}

Electrical Impedance Tomography (EIT) is a recently developed non-invasive imaging technique, where the inner structure of a reference object can be recovered from the current and voltage measurements on the object's surface. It is fast, inexpensive, portable and requires no ionizing radiation. For these reasons, EIT qualifies for continuous real time visualization right at the bedside. 

In clinical EIT applications, the reconstructed images are usually obtained by minimizing the linearized-data-fit residuum \cite{cheney1990noser,adler2009greit}. These algorithms are fast and simple. However, to the best of the authors' knowledge, there is no rigorous global convergence results that have been proved so far. Moreover, the reconstructed images usually tend to contain ringing artifacts. 

Recently, Seo and one of the author have shown in \cite{harrach2010exact} that a single linearized step can give the correct shape of the conductivity contrast. This result raises a question that whether to regularize the linearized-data-fit functional such that the corresponding minimizer yields a good approximation of the conductivity contrast. An affirmative answer has been proved in \cite{harrach2015enhancing} for the continuum boundary data. In the present paper, we shall apply this new algorithm to the real electrode setting and test with standard phantom experiment data. Numerical results later on show that this new algorithm helps to improve the quality of the reconstructed images as well as reduce the ringing artifacts. It is worth to mention that our new algorithm is non-iterative, hence, it does not depend on an initial guess and does not require expensive computation. Other non-iterative algorithms, for example, the Factorization Method  \cite{scherzer2011handbook,harrach2013recent} and the Monotonicity-based Method \cite{tamburrino2002new,tamburrino2006monotonicity,harrach2013monotonicity,aykroyd2006conditional}, on the other hand, are much more sensitive to measurement errors  than our new algorithm when phantom data or real data are applied \cite{azzouz2007factorization,harrach2010factorization,zhou2015monotonicity,garde2015convergence}.

The paper is organized as follows. In Section \ref{Sec:setting} we introduce the mathematical setting, describe how the measured data can be collected and set up a link between the mathematical setting and the measured data. Section \ref{Sec:algorithm} presents our new algorithm and the numerical results were shown in Section \ref{Sec:num}. We conclude this paper with a brief discussion in Section 

\section{Mathematical setting}
\label{Sec:setting}

Let $\Omega \subseteq \R^n, n\ge 2$ describe the imaging subject and $\sigma: \Omega \to \R$ be the unknown conductivity distribution inside $\Omega$. We assume that $\Omega$ is a bounded domain with smooth boundary $\partial \Omega$ and that the function $\sigma$ is real-valued, strictly positive and bounded. Electrical Impedance Tomography (EIT) aims at recovering $\sigma$ using voltage and current measurements on the boundary of $\Omega$. There are several ways to inject currents and measure voltages. We shall follow the \textit{Neighboring Method} (aka Adjacent Method) which was suggested by Brown and Segar in 1987 \cite{brown1987sheffield} and is still widely being used by practitioners. In this method, electrodes are attached on the object's surface, and an electrical current is applied through a pair of adjacent electrodes whilst the voltage is measured on all other pairs of adjacent electrodes excluding those pairs containing at least one electrode with injected current. Figure \ref{fig:M1} illustrates the first and second current patterns for a $16$-electrode EIT system. At the first current pattern (figure \ref{fig:M1}a), small currents of intensity $I^{(1)}_1$ and $I^{(1)}_2=-I^{(1)}_1$ are applied through electrodes $E_1$ and $E_2$ respectively, and the voltage differences $U^{(1)}_{3}, U^{(1)}_{4},\dots, U^{(1)}_{15}$ are measured successively on electrode pairs $(E_3,E_4),(E_4,E_5),\ldots,(E_{15},E_{16})$. In general, for a $L$-electrode EIT system, at the $k$-th current pattern, by injecting currents $I^{(k)}_k$ and $I^{(k)}_{k+1}=-I^{(k)}_k$ to electrodes $E_k$ and $E_{k+1}$ respectively, one gets $L-3$ voltage measurements $\{U^{(k)}_{l}\}$, where $l\in \{1,2,\dots, L\}$ and $|k-l|> 1$.
Note that here and throughout the paper, the electrode index is always considered modulo $L$, i.e. the index $L+1$ also refers to the first electrode, etc.

\begin{figure}[h!]
\centering

\tikzstyle{place}=[circle, draw, inner sep=0mm,minimum size=0.5cm,font=\fontsize{2}{2.5}\selectfont]

\begin{tikzpicture}

\def\xl{-3}
\def\yl{4}
\def\rs{1.7}
\def\rb{1.8}
\def\rbb{2.2}
\coordinate (C) at (\xl,\yl);

\draw (\xl,\yl) circle (\rs);
\node (omega) at (-3, 4) {\Large$\Omega$};
\node (text) at (-5, 6) {a)};

\foreach \a in {0,22.5,45,67.5,90,112.5,135,157.5,180,202.5,225,247.5,270,292.5,315,337.5}
{
\pgfmathsetmacro{\xss}{\xl+ \rs*cos(\a-4)}
\pgfmathsetmacro{\xsb}{\xl+ \rb*cos(\a-4)}
\pgfmathsetmacro{\xbs}{\xl+ \rs*cos(\a+4)}
\pgfmathsetmacro{\xbb}{\xl+ \rb*cos(\a+4)}

\pgfmathsetmacro{\yss}{\yl+ \rs*sin(\a-4)}
\pgfmathsetmacro{\ysb}{\yl+ \rb*sin(\a-4)}
\pgfmathsetmacro{\ybs}{\yl+ \rs*sin(\a+4)}
\pgfmathsetmacro{\ybb}{\yl+ \rb*sin(\a+4)}

\draw[fill=black]
(\xss,\yss)--(\xsb,\ysb)--(\xbb,\ybb)--(\xbs,\ybs)--cycle;
}

\foreach \a/\atext in {
0/E_1,
22.5/E_2,
45/E_3,
67.5/E_4,
90/E_5,
112.5/E_6,
135/E_7,
157.5/E_8,
180/E_9,
202.5/E_{10},
225/E_{11},
247.5/E_{12},
270/E_{13},
292.5/E_{14},
315/E_{15},
337.5/E_{16}}
{
\coordinate (T) at ($(C)+(\a:1.33)$);
\draw (T) node {\tiny{$\atext$}};
}

\shade[line width=2pt, top color=black] 
  (-3.5, 5) to [out=200, in=120]   (-2.2,4.3)
         to [out=20, in=50] (-3.5,5) ; 
         
\shade[line width=2pt, top color=black] 
  (-3.5, 3) to [out=20, in=10]   (-2.5,3.8)
         to [out=200, in=150] (-3.5,3) ; 
         
                 
\foreach \a in {0,22.5,45,67.5,90,112.5,135,157.5,180,202.5,225,247.5,270,292.5,315,337.5}
{
\pgfmathsetmacro{\xs}{\xl+ \rb*cos(\a)}
\pgfmathsetmacro{\xb}{\xl+ \rbb*cos(\a)}

\pgfmathsetmacro{\ys}{\yl+ \rb*sin(\a)}
\pgfmathsetmacro{\yb}{\yl+ \rbb*sin(\a)}

\draw[fill=black]
(\xs,\ys)--(\xb,\yb);

\fill[black] (\xb,\yb) circle(0.5mm);
}

\pgfmathsetmacro{\xsi}{\xl+ \rb*cos(0)}
\pgfmathsetmacro{\xbi}{\xl+ \rbb*cos(0)}
\pgfmathsetmacro{\xmi}{(\xsi+\xbi)/2}

\pgfmathsetmacro{\ysi}{\yl+ \rb*sin(0)}
\pgfmathsetmacro{\ybi}{\yl+ \rbb*sin(0)}
\pgfmathsetmacro{\ymi}{(\ysi+ \ybi)/2}
\draw[<-] (\xmi,\ymi) -- (\xbi,\ybi);

\pgfmathsetmacro{\xse}{\xl+ \rb*cos(22.5)}
\pgfmathsetmacro{\xbe}{\xl+ \rbb*cos(22.5)}
\pgfmathsetmacro{\xme}{(\xse+\xbe)/2}

\pgfmathsetmacro{\yse}{\yl+ \rb*sin(22.5)}
\pgfmathsetmacro{\ybe}{\yl+ \rbb*sin(22.5)}
\pgfmathsetmacro{\yme}{(\yse+ \ybe)/2}
\draw[->] (\xse,\yse) -- (\xme,\yme);

\pgfmathsetmacro{\xbm}{(\xbi+ \xbe)/2}
\pgfmathsetmacro{\ybm}{(\ybi+ \ybe)/2}

\node (n0) at (\xbm,\ybm) [place] {\tiny$I^{(1)}$};
\draw (\xbi,\ybi) -- (n0)
           (n0) -- (\xbe,\ybe);

\foreach \b in {45,67.5,90,112.5,135,157.5,180,202.5,225,247.5,270,292.5,315}
{
\pgfmathsetmacro{\initialx}{\xl+ \rbb*cos(\b)}
\pgfmathsetmacro{\endx}{\xl+ \rbb*cos(\b+22.5)}
\pgfmathsetmacro{\middlex}{(\initialx+\endx)/2}

\pgfmathsetmacro{\initialy}{\yl+ \rbb*sin(\b)}
\pgfmathsetmacro{\endy}{\yl+ \rbb*sin(\b+22.5)}
\pgfmathsetmacro{\middley}{(\initialy+\endy)/2}

\pgfmathsetmacro{\value}{1+\b/22.5}

\node (n) at (\middlex,\middley) [place] {\fontsize{2}{2.5}\selectfont $U^{(1)}_{\pgfmathprintnumber{\value}}$};

\draw (\initialx,\initialy) -- (n)
           (n) -- (\endx,\endy);
}

\def\xr{2}
\def\yr{4}

\coordinate (Cr) at (\xr,\yr);

\draw (\xr,\yr) circle (\rs);
\node (omegar) at (\xr, \yr) {\Large$\Omega$};
\node (text) at (0, 6) {b)};

\foreach \ar in {0,22.5,45,67.5,90,112.5,135,157.5,180,202.5,225,247.5,270,292.5,315,337.5}
{
\pgfmathsetmacro{\xssr}{\xr+ \rs*cos(\ar-4)}
\pgfmathsetmacro{\xsbr}{\xr+ \rb*cos(\ar-4)}
\pgfmathsetmacro{\xbsr}{\xr+ \rs*cos(\ar+4)}
\pgfmathsetmacro{\xbbr}{\xr+ \rb*cos(\ar+4)}

\pgfmathsetmacro{\yssr}{\yr+ \rs*sin(\ar-4)}
\pgfmathsetmacro{\ysbr}{\yr+ \rb*sin(\ar-4)}
\pgfmathsetmacro{\ybsr}{\yr+ \rs*sin(\ar+4)}
\pgfmathsetmacro{\ybbr}{\yr+ \rb*sin(\ar+4)}

\draw[fill=black]
(\xssr,\yssr)--(\xsbr,\ysbr)--(\xbbr,\ybbr)--(\xbsr,\ybsr)--cycle;
}

\foreach \ar/\artext in {
0/E_1,
22.5/E_2,
45/E_3,
67.5/E_4,
90/E_5,
112.5/E_6,
135/E_7,
157.5/E_8,
180/E_9,
202.5/E_{10},
225/E_{11},
247.5/E_{12},
270/E_{13},
292.5/E_{14},
315/E_{15},
337.5/E_{16}}
{
\coordinate (Tr) at ($(Cr)+(\ar:1.33)$);
\draw (Tr) node {\tiny{$\artext$}};
}

\shade[line width=2pt, top color=black] 
  (1.5, 5) to [out=200, in=120]   (2.8,4.3)
         to [out=20, in=50] (1.5,5) ; 
         
\shade[line width=2pt, top color=black] 
  (1.5, 3) to [out=20, in=10]   (2.5,3.8)
         to [out=200, in=150] (1.5,3) ; 
         
                 
\foreach \a in {0,22.5,45,67.5,90,112.5,135,157.5,180,202.5,225,247.5,270,292.5,315,337.5}
{
\pgfmathsetmacro{\xs}{\xr+ \rb*cos(\a)}
\pgfmathsetmacro{\xb}{\xr+ \rbb*cos(\a)}

\pgfmathsetmacro{\ys}{\yr+ \rb*sin(\a)}
\pgfmathsetmacro{\yb}{\yr+ \rbb*sin(\a)}

\draw[fill=black]
(\xs,\ys)--(\xb,\yb);

\fill[black] (\xb,\yb) circle(0.5mm);
}

\pgfmathsetmacro{\xsi}{\xr+ \rb*cos(22.5)}
\pgfmathsetmacro{\xbi}{\xr+ \rbb*cos(22.5)}
\pgfmathsetmacro{\xmi}{(\xsi+\xbi)/2}

\pgfmathsetmacro{\ysi}{\yr+ \rb*sin(22.5)}
\pgfmathsetmacro{\ybi}{\yr+ \rbb*sin(22.5)}
\pgfmathsetmacro{\ymi}{(\ysi+ \ybi)/2}
\draw[<-] (\xmi,\ymi) -- (\xbi,\ybi);

\pgfmathsetmacro{\xse}{\xr+ \rb*cos(45)}
\pgfmathsetmacro{\xbe}{\xr+ \rbb*cos(45)}
\pgfmathsetmacro{\xme}{(\xse+\xbe)/2}

\pgfmathsetmacro{\yse}{\yr+ \rb*sin(45)}
\pgfmathsetmacro{\ybe}{\yr+ \rbb*sin(45)}
\pgfmathsetmacro{\yme}{(\yse+ \ybe)/2}
\draw[->] (\xse,\yse) -- (\xme,\yme);

\pgfmathsetmacro{\xbm}{(\xbi+ \xbe)/2}
\pgfmathsetmacro{\ybm}{(\ybi+ \ybe)/2}

\node (n0) at (\xbm,\ybm) [place] {\tiny$I^{(2)}$};
\draw (\xbi,\ybi) -- (n0)
           (n0) -- (\xbe,\ybe);

\foreach \b in {67.5,90,112.5,135,157.5,180,202.5,225,247.5,270,292.5,315,337.5}
{
\pgfmathsetmacro{\initialx}{\xr+ \rbb*cos(\b)}
\pgfmathsetmacro{\endx}{\xr+ \rbb*cos(\b+22.5)}
\pgfmathsetmacro{\middlex}{(\initialx+\endx)/2}

\pgfmathsetmacro{\initialy}{\yr+ \rbb*sin(\b)}
\pgfmathsetmacro{\endy}{\yr+ \rbb*sin(\b+22.5)}
\pgfmathsetmacro{\middley}{(\initialy+\endy)/2}

\pgfmathsetmacro{\value}{1+\b/22.5}

\node (n) at (\middlex,\middley) [place] {\fontsize{2}{2.5}\selectfont $U^{(2)}_{\pgfmathprintnumber{\value}}$};

\draw (\initialx,\initialy) -- (n)
           (n) -- (\endx,\endy);
}

\end{tikzpicture}
\caption{The Neighboring Method: a) first current pattern, b) second current pattern.} \label{fig:M1}
\end{figure}
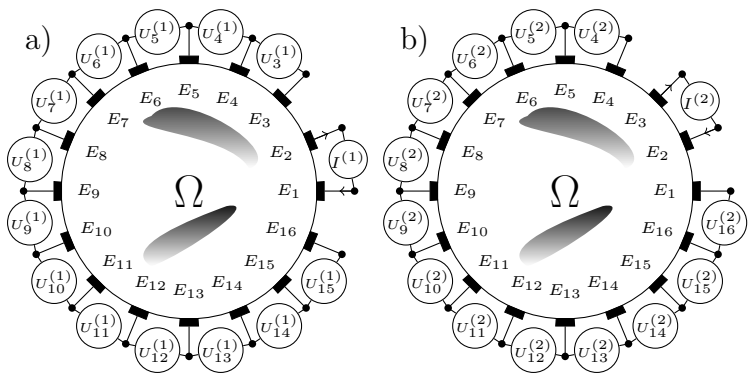


Assuming that the electrodes $E_l$ are relatively open and connected subsets of $\partial \Omega$, that they are perfectly conducting and that contact impedances are negligible, the resulting electric potential $u^{(k)}$ at the $k$-th current pattern obeys the following mathematical model  (the so-called \textit{shunt model} \cite{cheng1989electrode}):
\begin{eqnarray}\label{eqn:shunt}
 \begin{array}{lllll}
& \nabla \cdot (\sigma \nabla u) &=& 0 & \mbox{ in } \Omega, \\
& \int_{E_l} \sigma \partial_{\nu} u \; {\rm d}s &=& I^{(k)}_l & \mbox{ for } l =1,\dots, L,\\
& \sigma \partial_{\nu} u &=&0 & \mbox{ on } \partial \Omega \setminus \bigcup_{l=1}^L E_l,\\
&u|_{E_l} &=& \mbox{const.} & \mbox{ for } l=1,\dots, L.
\end{array}
\end{eqnarray}
Here $\nu$ is the unit normal vector on $\partial\Omega$ pointing outward and $I^{(k)}_l:=(\delta_{k,l}-\delta_{k+1,l})I$ describes the $k$-th applied current pattern where a current of strength $I>0$ is driven through the $k$-th and $(k+1)$-th electrode.
Notice that $\{I^{(k)}_l\}$ satisfy the conservation of charge $\sum_{l=1}^L I^{(k)}_l=0$, and that the electric potential $u^{(k)}$ is uniquely determined by (\ref{eqn:shunt}) only up to the addition of a constant.
The voltage measurements are given by
\begin{eqnarray}\label{eqn:voltage}
U^{(k)}_{l}:=u^{(k)}|_{E_l}-u^{(k)}|_{E_{l+1}}.
\end{eqnarray}

The herein used shunt model ignores the effect of contact impedances between the electrodes and the imaging domain. 
This is only valid when voltages are measured on small (see \cite{hanke2011justification}) and current-free electrodes, so that 
(\ref{eqn:shunt}) correctly models only the measurements $U^{(k)}_{l}$ with $|k-l|>1$. For difference measurements, the missing 
elements $U^{(k)}_l$ with $|k-l|\leq 1$, on the other hand, can be calculated by interpolation taking into account reciprocity, conservation of voltages and the geometry-specific smoothness of difference EIT data, cf. \cite{harrach2015interpolation}.
For an imaging subject with unknown conductivity $\sigma$, one thus obtains a full matrix of measurements
$U(\sigma)=(U^{(k)}_l)_{k,l=1,\ldots,L}$.




\section{Monotonicity-based regularization}
\label{Sec:algorithm}

\subsection{Standard one-step linearization methods}\label{subsec:1}
In difference EIT, the measurements $U(\sigma)$ are compared with measurements $U(\sigma_0)$ for some reference conductivity distribution $\sigma_0$ in order to reconstruct the conductivity difference $\sigma-\sigma_0$. 
This is usually done by a single linearization step 
\[
U'(\sigma_0) (\sigma-\sigma_0) \approx U(\sigma)-U(\sigma_0).
\]
where $U'(\sigma_0):L^{\infty}(\Omega)\to \R^{L\times L}$ is the Fr\'echet derivative of the voltage measurements
\[
U'(\sigma_0):\kappa\mapsto \left(-\int_{\Omega}\kappa \nabla u^{(k)}_{\sigma_0}\cdot \nabla u^{(l)}_{\sigma_0}\,{\rm d}x\right)_{1\le k,l \le L}
\]

We discretize the reference domain $\overline{\Omega}=\cup_{j=1}^P \overline{P}_j$ into $P$ disjoint open pixels $P_j$ and 
make the piecewise-constant Ansatz
\[
\kappa(x)= \sum_{j=1}^P \kappa_j \chi_{P_j}(x).
\]
This approach leads to the linear equation
\begin{equation}\label{eqn:one_step_linearized}
\mathbf{S} \mathbf{\kappa} = \mathbf{V}
\end{equation}
where $\mathbf{V}$ and the columns of the \emph{sensitivity matrix} $\mathbf{S}$ contain the entries of the measurements $U(\sigma)-U(\sigma_0)$ and the discretized Fr\'echet derivative, resp., written as long vectors, i.e.,
\begin{eqnarray*} 
\mathbf{\kappa}=(\kappa_j)_{j=1}^P\in \R^P,\\
\mathbf{V}=(V_{i})_{i=1}^{L^2}\in \R^{L^2}, \quad \mbox{ with } V_{(l-1)L+k}=U_l^{(k)}(\sigma)-U_l^{(k)}(\sigma_0),\\
\mathbf{S}=(S_{i,j})\in \R^{L^2,P}, \quad \mbox{ with } S_{(l-1)L+k, j}=-\int_{P_j} \nabla u^{(k)}_{\sigma_0}\cdot \nabla u^{(l)}_{\sigma_0}\,{\rm d}x.
\end{eqnarray*}

Most practically used EIT algorithms are based on solving a regularized variant of (\ref{eqn:one_step_linearized})
to obtain an approximation $\kappa$ to the conductivity difference $\sigma-\sigma_0$. 
The popular algorithms NOSER \cite{cheney1990noser} and GREIT \cite{adler2009greit} use (generalized) Tikhonov regularization and minimize 
\[
\norm{\mathbf{S} \mathbf{\kappa} -  \mathbf{V} }^2_\mathrm{res} + \alpha \norm{ \mathbf{\kappa} }^2_\mathrm{pen} \to \mbox{min!}
\]
with (heuristically chosen) weighted Euclidian norms $\norm{\cdot}_\mathrm{res}$ and $\norm{\cdot}_\mathrm{pen}$ in the residuum and penalty term.

\subsection{Monotonicity-based regularization}

It has been shown in \cite{harrach2010exact} that shape information in EIT is invariant under linearization.
Thus one-step linearization methods are principally capable of reconstructing the correct (outer) support of the
conductivity difference even though they ignore the non-linearity of the EIT measurement process. 
In \cite{harrach2015enhancing} the authors developed a monotonicity-based regularization method for the linearized EIT equation for which (in the continuum model)
it can be guaranteed that the regularized solutions converge against a function that shows the correct outer shape. In this section, we formulate and analyze this new method for real electrode measurements, and in the next section we will apply it to real data from a phantom experiment and compare it with the GREIT method. 

The main idea of monotonicity-based regularization is to minimize the residual of the linearized equation~(\ref{eqn:one_step_linearized}) 
\[
\norm{\mathbf{S} \mathbf{\kappa} -  \mathbf{V} }^2 \to \mbox{min!}
\] 
with constraints on the entries of $\mathbf{\kappa}$ that are obtained from monotonicity tests. 

For the following, we assume that the background is homogeneous and that all anomalies are more conductive, or all anomalies are less conductive than the background, i.e., $\sigma_0$ is constant, and either 
\[
\sigma(x)=\sigma_0+\gamma(x) \chi_D(x), \quad \mbox{ or } \quad
\sigma(x)=\sigma_0-\gamma(x) \chi_D(x).
\]
$D$ is an open set denoting the conductivity anomalies, and $\gamma:\ D\to \R$ is the contrast of the anomalies. We furthermore assume that we are given a lower bound $c>0$ of the anomaly contrast, i.e. $\gamma(x)\geq c$.

For the monotonicity tests it is crucial to consider the measurements and the columns of the sensitivity matrix $\mathbf{S}$ as matrices and compare
them in terms of matrix definiteness, cf.\ \cite{choi2014regularizing,harrach2015resolution,harrach2015interpolation} for the origins of this sensitivity matrix based approach. Let $V:=U(\sigma)-U(\sigma_0)\in \R^{L\times L}$ denote the EIT difference measurements written as $L\times L$-matrix, and $S_k\in \R^{L\times L}$ denote the $k$-th column of the sensitivity matrix written as $L\times L$-matrix, i.e. the $(j,l)$-th entry of $S_k$ is given by
\[
-\int_{P_k} \nabla u^{(j)}_{\sigma_0}\cdot \nabla u^{(l)}_{\sigma_0}\,{\rm d}x.
\]
We then define for each pixel $P_k$ 
\begin{eqnarray}\label{betak}
\beta_k:=\max\{\alpha \ge 0: \alpha S_k \geq -|V|\},
\end{eqnarray}
where $|V|$ denotes the matrix absolute value of $V$, and the comparison 
$\alpha S_k \geq -|V|$ is to be understood in the sense of matrix definiteness, i.e. 
$\alpha S_k \geq -|V|$ holds if and only if all eigenvalues of $\alpha S_k +|V|$ are non-negative.

Following \cite{harrach2015enhancing} we then solve the linearized EIT equation (\ref{eqn:one_step_linearized}) using
the monotonicity constraints $\beta_k$. We minimize the Euclidean norm of the residuum
\begin{equation}\label{eq:min_lin_res}
\norm{\mathbf{S} \mathbf{\kappa} -  \mathbf{V} }^2 \to \mbox{min!}
\end{equation}
under the constraints that
\begin{enumerate}[(C1)]
\item in the case $\sigma\geq \sigma_0$: $0\leq \kappa_k \leq \min(a_+,\beta_k)$, and
\item in the case $\sigma\leq \sigma_0$: $0\geq \kappa_k \geq -\min(a_-,\beta_k)$.
\end{enumerate}
where $a_+:=\sigma_0-\frac{\sigma_0^2}{\sigma_0+c}$, and $a_-:=c$.

For noisy data $V^\delta$ with $\norm{V^\delta-V}\leq V$ this approach can be regularized by replacing $\beta_k$ 
with
\begin{eqnarray}\label{betakdelta}
\beta_k^\delta:=\max\{\alpha \ge 0: \alpha S_k \geq -|V|-\delta I\},
\end{eqnarray}
where $I\in \R^{L\times L}$ is the identity matrix. For the implementation of $\beta_k^\delta$ see section \ref{Sec:num}.

For the continuum model, and under the assumption that $D$ has connected complement, the authors \cite{harrach2015enhancing} showed that for exact data this monotonicity-constrained minimization of the linearized EIT residuum admits a unique solution and that
the support of the solution agrees with the anomalies support $D$ up to the pixel partition. Moreover, \cite{harrach2015enhancing} also shows 
that for noisy data and using the regularized constraints $\beta_k^\delta$, minimizers exist and that, for $\delta\to 0$, they converge to the
minimizer with the correct support. Since practical electrode measurements can be regarded as an approximation to the continuum model, we therefore expect that the above approach will also well approximate the anomaly support for real electrode data.

In the continuum model, the constraints $\beta_k$ will be zero outside the support of the anomaly and positive for each pixel inside the anomaly. The first property relies
on the existence of localized potentials \cite{gebauer2008localized} and is only true in the limit of infinitely many, infinitely small electrodes. The latter property is however true for any number of electrodes as the following result shows:

\begin{thm}\label{thm:beta_k}
If $P_k\subseteq D$, then 
\begin{enumerate}[(a)]
\item in the case $\sigma\geq \sigma_0$ the constraint $\beta_k$ fulfills $\beta_k\geq a_+>0$, and
\item in the case $\sigma\leq \sigma_0$ the constraint $\beta_k$ fulfills $\beta_k\geq a_->0$.
\end{enumerate}
\end{thm}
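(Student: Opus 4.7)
The plan is to verify the defining PSD inequality $\alpha S_k + |V| \geq 0$ directly with $\alpha = a_+$ and $\alpha = a_-$, respectively. The first step is to rewrite both $U(\sigma)$ and $S_k$ as Dirichlet energies through a common ``combined potential.'' For $\xi \in \R^L$, set $u_\xi^\sigma := \sum_{l=1}^L \xi_l\, u^{(l)}_\sigma$, which solves the shunt model with electrode currents $I_m = \xi_m - \xi_{m-1}$. Integration by parts, together with the reciprocity-induced symmetry of $U(\sigma)$, yields
\[
\xi^T U(\sigma) \xi = \int_\Omega \sigma\,|\nabla u_\xi^\sigma|^2\,{\rm d}x, \qquad \xi^T S_k \xi = -\int_{P_k} |\nabla u_\xi^{\sigma_0}|^2\,{\rm d}x,
\]
so $S_k$ is negative semidefinite and the claim reduces to monotonicity-type estimates for $\xi^T V \xi$ in terms of $\int_{P_k}|\nabla u_\xi^{\sigma_0}|^2\,{\rm d}x$.

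For case (a), with $\sigma = \sigma_0 + \gamma\chi_D$ and $\gamma \geq c$, the central step is the sharp monotonicity inequality
\[
\xi^T(U(\sigma_0) - U(\sigma))\xi \geq \int_\Omega \frac{\sigma_0(\sigma-\sigma_0)}{\sigma}\,|\nabla u_\xi^{\sigma_0}|^2\,{\rm d}x.
\]
I would derive this from a dual (flux) variational characterization of the shunt model: $\xi^T U(\sigma)\xi$ equals the minimum of $\int_\Omega |J|^2/\sigma\,{\rm d}x$ among $L^2$ vector fields $J$ that are weakly divergence-free in $\Omega$, have $J\cdot\nu$ supported on the electrodes, and satisfy $\int_{E_m} J\cdot\nu\,{\rm d}s = \xi_m - \xi_{m-1}$. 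The minimizer is $J^\star = \sigma\nabla u_\xi^\sigma$, and the identity follows from an integration by parts using that $u_\xi^\sigma$ is constant on each $E_l$ so that the boundary contribution against the (zero-net-flux) admissible variations vanishes. Plugging in the admissible test field $J = \sigma_0 \nabla u_\xi^{\sigma_0}$ and subtracting $\xi^T U(\sigma_0)\xi = \int_\Omega \sigma_0|\nabla u_\xi^{\sigma_0}|^2\,{\rm d}x$ then produces the sharp inequality above. Since $P_k\subseteq D$ and $\gamma\mapsto \sigma_0\gamma/(\sigma_0+\gamma)$ is increasing, its right-hand side is bounded below by $a_+ \int_{P_k}|\nabla u_\xi^{\sigma_0}|^2\,{\rm d}x = -a_+\,\xi^T S_k\xi$. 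This simultaneously shows $V\leq 0$ (so $|V|=-V$) and yields $\xi^T(a_+ S_k + |V|)\xi \geq 0$ for every $\xi$, proving (a).

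For case (b), with $\sigma = \sigma_0 - \gamma\chi_D$, only the simpler primal estimate is needed. Using $u_\xi^{\sigma_0}$ as a test function in the shunt-model variational principle for conductivity $\sigma$ gives $\xi^T U(\sigma)\xi \geq 2\,\xi^T U(\sigma_0)\xi - \int_\Omega \sigma\,|\nabla u_\xi^{\sigma_0}|^2\,{\rm d}x$, which rearranges to
\[
\xi^T V \xi \geq \int_\Omega (\sigma_0 - \sigma)\,|\nabla u_\xi^{\sigma_0}|^2\,{\rm d}x \geq c\int_{P_k}|\nabla u_\xi^{\sigma_0}|^2\,{\rm d}x = -a_-\,\xi^T S_k\xi.
\]
This automatically implies $V\geq 0$, so $|V|=V$, and hence $\xi^T(a_- S_k + |V|)\xi \geq 0$, finishing (b).

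The principal obstacle is the sharp inequality in case (a): the constant $a_+ = \sigma_0 c/(\sigma_0+c)$ is forced by the factor $\sigma_0(\sigma-\sigma_0)/\sigma = \sigma_0\gamma/(\sigma_0+\gamma)$ that appears only in the flux-based estimate, whereas the easier primal estimate used in (b) would instead produce $\int(\sigma-\sigma_0)\,|\nabla u_\xi^\sigma|^2\,{\rm d}x$, expressed in $u_\xi^\sigma$ rather than $u_\xi^{\sigma_0}$, and this cannot be controlled by $S_k$. Establishing the dual principle for the shunt model therefore requires a careful setup of the admissible class of trial currents together with an integration-by-parts argument exploiting the coexisting Dirichlet-type (constant on each electrode) and Neumann-type (prescribed total electrode flux) boundary conditions built into the shunt model.
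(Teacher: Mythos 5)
Your proof is correct and follows essentially the same route as the paper: both arguments reduce the claim to the two-sided monotonicity estimate (\ref{ineq:monotonicity}) combined with the pointwise bounds $\frac{\sigma_0}{\sigma}(\sigma-\sigma_0)\geq a_+\chi_{P_k}$ and $\sigma_0-\sigma\geq a_-\chi_{P_k}$, and then read off the sign of $V$ and the semidefiniteness of $\alpha S_k+|V|$ from the resulting quadratic-form inequalities. The only difference is that the paper imports (\ref{ineq:monotonicity}) from \cite{harrachcombining}, whereas you re-derive it from the primal Dirichlet principle and the dual (flux/Thomson) principle for the shunt model, which is exactly how the cited lemma is established.
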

\begin{proof}
If $P_k\subseteq D$ and $\sigma\geq \sigma_0$ then
\[
\frac{\sigma_0}{\sigma}(\sigma-\sigma_0)=\sigma_0-\frac{\sigma_0^2}{\sigma}
\geq \left(\sigma_0-\frac{\sigma_0^2}{\sigma_0+c}\right) \chi_{P_k}=a_+ \chi_{P_k},
\]
and if $P_k\subseteq D$ and $\sigma\leq \sigma_0$ then
\[
\sigma_0-\sigma\geq c P_k = a_- P_k.
\]
Hence, it suffices to show that $\alpha S_k \geq - |V|$ holds for all $\alpha>0$ that fulfill 
\begin{enumerate}[(a)]
\item $\alpha \chi_{P_k} \leq \frac{\sigma_0}{\sigma}(\sigma-\sigma_0)$, or
\item $\alpha \chi_{P_k}\leq \sigma_0-\sigma$. 
\end{enumerate}
We use the following monotonicity relation from \cite[Lemma 3.1]{harrachcombining} (see also \cite{kang1997inverse,ikehata1998size} for the origin of this estimate):
For any vector $g=(g_j)_{j=1}^L \in \R^L$ we have that
\begin{equation}\label{ineq:monotonicity}
\int_{\Omega} \frac{\sigma_0}{\sigma}(\sigma_0-\sigma)\left| \nabla u^{(g)}_{\sigma_0}\right|^2 {\rm d}x 
\geq g^{\top} V g
\geq \int_{\Omega}(\sigma_0-\sigma) \left| \nabla u^{(g)}_{\sigma_0}\right|^2 {\rm d}x,
\end{equation}
with $u^{(g)}_{\sigma_0}= \sum_{j=1}^L g_j \nabla u^{(j)}_{\sigma_0}$.

If $\alpha \chi_{P_k} \leq \frac{\sigma_0}{\sigma}(\sigma-\sigma_0)$, then 
\begin{eqnarray*}
0\geq g^{\top} (\alpha S_k) g  = -\int_{P_k} \alpha \left| \nabla u^{(g)}_{\sigma_0}\right|^2
\geq \int_{\Omega} \frac{\sigma_0}{\sigma}(\sigma_0-\sigma) \left| \nabla u^{(g)}_{\sigma_0}\right|^2
\geq g^{\top} V g,
\end{eqnarray*}
which shows that $|V|=-V\geq - \alpha S_k$.

If $\alpha \chi_{P_k}\leq \sigma_0-\sigma$, then 
\begin{eqnarray*}
0\leq g^{\top} (-\alpha S_k) g  = \int_{P_k} \alpha \left| \nabla u^{(g)}_{\sigma_0}\right|^2
\leq \int_{\Omega} (\sigma_0-\sigma) \left| \nabla u^{(g)}_{\sigma_0}\right|^2
\leq g^{\top} V g,
\end{eqnarray*}
which shows that $|V|=V\geq - \alpha S_k$.
\end{proof}

\section{Numerical results}\label{Sec:num}
In this section, we will test our algorithm on the data set \texttt{iirc}$\_$\texttt{data}$\_$\texttt{2006} measured
by Professor Eung Je Woo's EIT research group in Korea \cite{wi2014multi,oh2007calibration,oh2007multi,oh2011fully}. \texttt{iirc} stands for Impedance Imaging Research Center. The data set \texttt{iirc}$\_$\texttt{data}$\_$\texttt{2006} is publicly available as part of the
open source software framework EIDORS \cite{adler2006uses} (Electrical Impedance and Diffused Optical Reconstruction Software). Since
\texttt{iirc}$\_$\texttt{data}$\_$\texttt{2006} is also frequently used in the EIDORS tutorials, we believe that this is a good benchmark example to test our new algorithm. 

\subsection{Experiment setting}
The data set \texttt{iirc}$\_$\texttt{data}$\_$\texttt{2006}  was collected using the 16-electrode EIT system KHU Mark1 (see \cite{Oh07a} for more information of this system). The reference object was a Plexiglas tank filled with saline. The tank was a cylinder of diameter 0.2m with 0.01m diameter round electrodes attached on its boundary. Saline was filled to about 0.06m depth. Inside the tank, one put a Plexiglas rod of diameter 0.02m. The conductivity of the saline was 0.15 S/m and the Plexiglas rod was basically non-conductive. 
Data acquisition protocol was adjacent stimulation, adjacent measurement with data acquired on all electrodes. 

The data set \texttt{iirc}$\_$\texttt{data}$\_$\texttt{2006} contains the voltage measurements for both homogeneous and non-homogeneous cases. Measurements for the homogeneous case were obtained when the Plexiglas rod was taken away (reference conductivity in this case is 0.15 S/m). In the non-homogeneous case, $100$ different voltage measurements were measured corresponding to $100$ different positions of the Plexiglas rod. 

\subsection{Numerical implementation}

 EIDORS \cite{adler2006uses} (Electrical Impedance and Diffused Optical Reconstruction Software)  is an open source software that is widely used to reconstruct images  in electrical
 impedance tomography and diffuse optical tomography. To reconstruct images with EIDORS, one first needs to build an EIDORS model that fits with the measured data.   In this paper, we shall use the same EIDORS model described in the EIDORS tutorial web-page:
  
\fl \quad \quad \quad \quad \, \verb"http://eidors3d.sourceforge.net/tutorial/EIDORS_basics/tutorial110.shtml"

Figure \ref{fig:3} shows the reconstructed images of the $9$th-inhomogeneous measurements with different regularization parameters using the EIDORS built-in command \texttt{inv$\_$solve}, which follows the algorithm proposed in \cite{adler1996electrical}. We emphasize that, Figure \ref{fig:3b} (regularization parameter is chosen as $0.03$ by default) was considered at the EIDORS tutorial web-page, we show them here again in order to easily compare them with the reconstructed images using our new method later on.

\begin{figure}[htp]
	\begin{center}
		\subfigure[Parameter $= 0.003$]{\label{fig:3a}\includegraphics[scale=0.5]{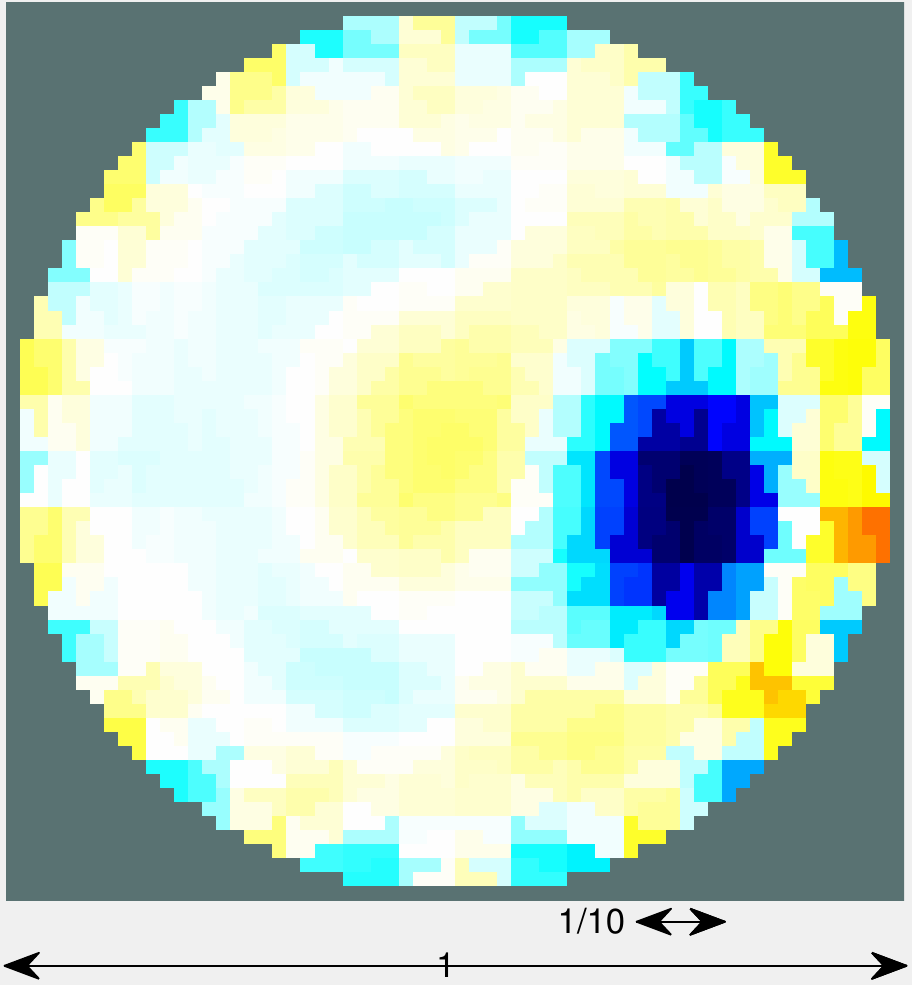}}
		\subfigure[Parameter $= 0.03$]{\label{fig:3b}\includegraphics[scale=0.5]{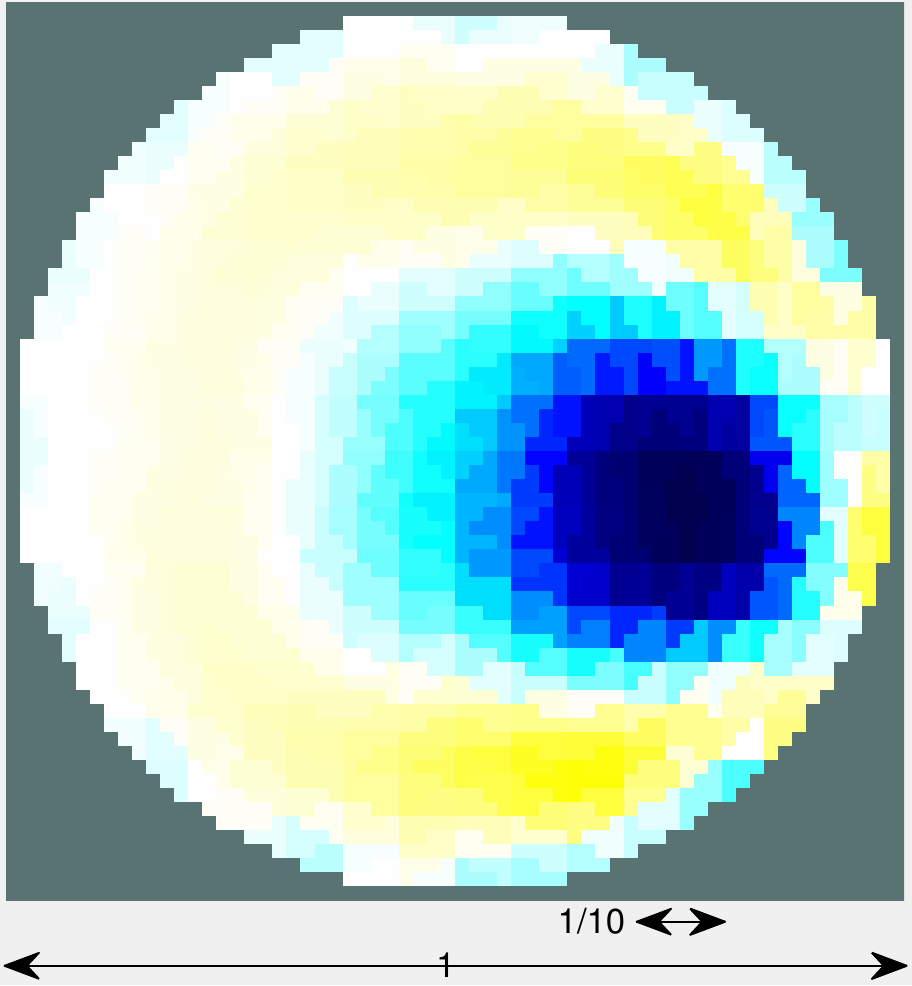}}
		\subfigure[Parameter $= 0.3$]{\label{fig:3c}\includegraphics[scale=0.5]{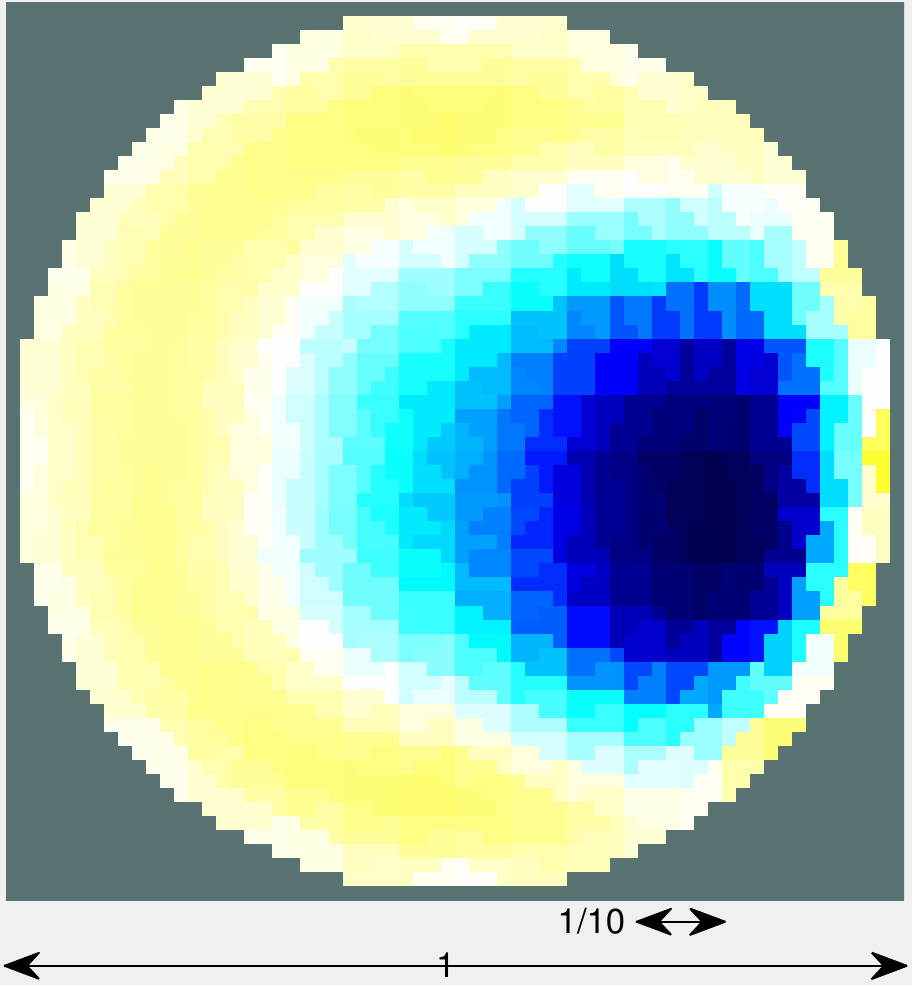}}
	\end{center}
	\caption{Reconstructed images for the $9$th-inhomogeneous voltage measurements with different regularization parameters.}
	\label{fig:3}
\end{figure}

\subsection{Minimizing the residuum}

In the EIDORS model suggested in the EIDORS tutorial web-page, the reference body was chosen by default as a disk of diameter $1$m and the default reference conductivity was $1$ S/m. However, in the experiment setting, the reference body was a cylinder of diameter $0.2$m and the reference conductivity was $0.15$ S/m. Hence, an appropriate scaling factor should be applied to the measurements, to make sure that the EIDORS model fits with these measurements. In the EIDORS tutorial web-page, the measurements were scaled by multiplying by a factor $10^{-4}$. In this paper, to increase the precision of the model, we shall find the best scaling factor that minimizes the error between the measured data and the data generated by the EIDORS model. More precisely, let call \texttt{vh} the measured data for homogeneous case and \texttt{vh$\_$model} the homogeneous data generated by the EIDORS model, the best scaling factor is a minimizer of the following problem
\[
\min_{c \in \R} \|c*\texttt{vh} - \texttt{vh$\_$model}\|_2
\]   
For this experiment setting, the best factor is $2.49577*10^{-5}$. From now on, by measured data we always refer to scaled measured data with respect to this best factor.


\begin{figure}[htp]
	\begin{center}
		\subfigure[\texttt{cvx}]{\label{fig:4a}\includegraphics[scale=0.5]{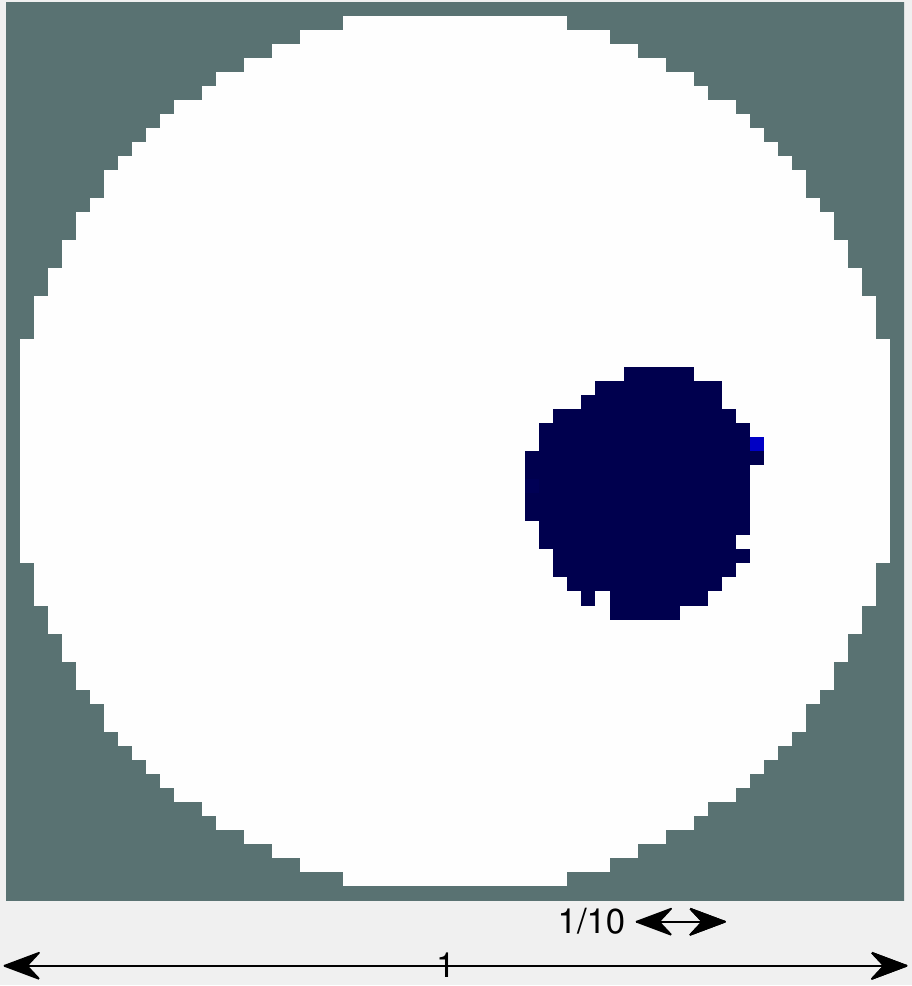}}
		\subfigure[\texttt{quadprog}]{\label{fig:4b}\includegraphics[scale=0.5]{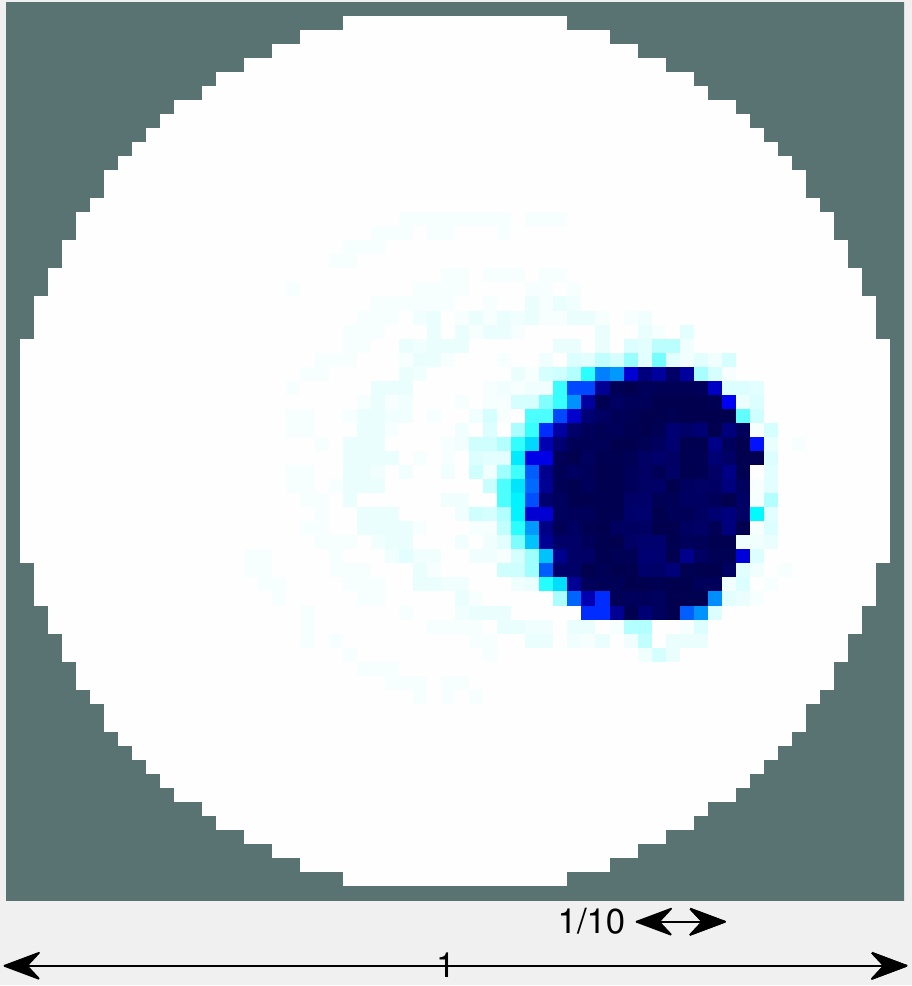}}\\
		\subfigure[EIDORS  \texttt{inv}$\_$\texttt{solve}]{\label{fig:4c}\includegraphics[scale=0.5]{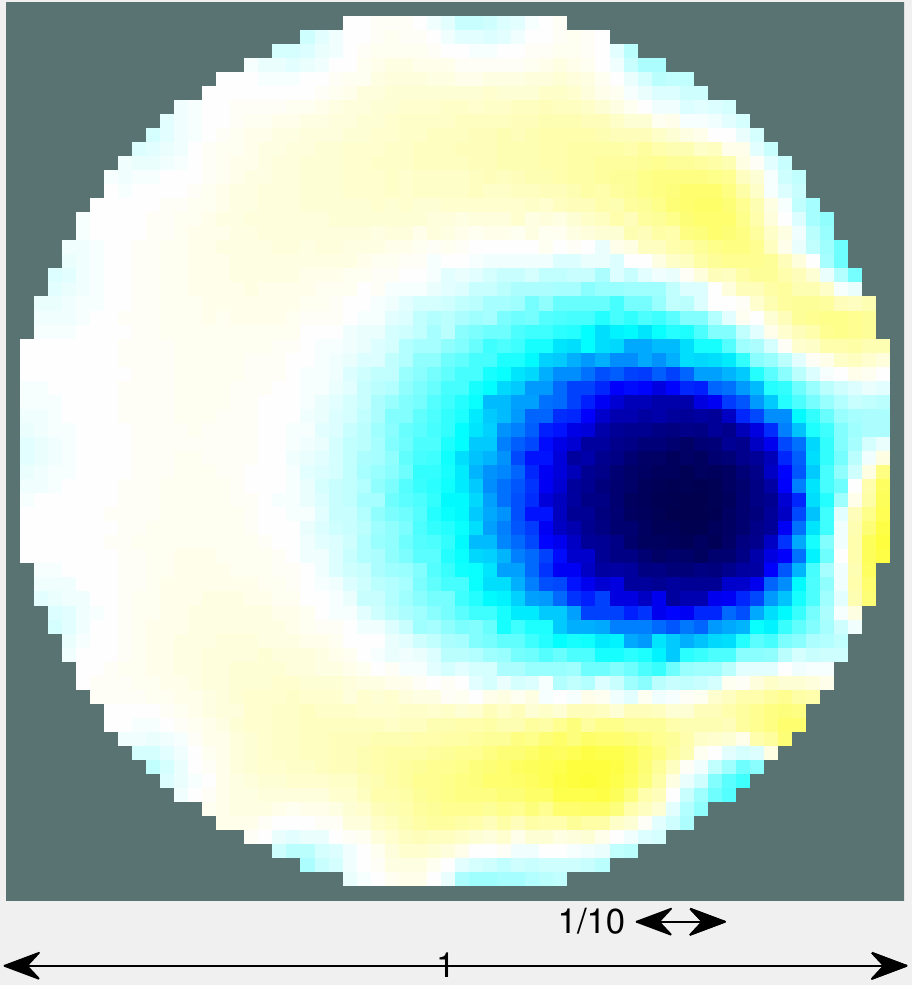}}
		\subfigure[GREIT]{\label{fig:4d}\includegraphics[scale=0.5]{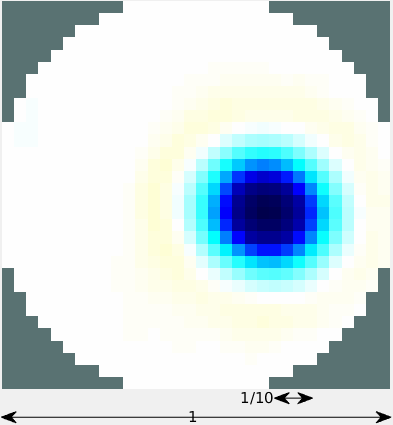}}
	\end{center}
	\caption{Reconstructed images for the $9$th-inhomogeneous voltage measurements with different algorithms (after scaling the measured data w.r.t the best scaling factor).}
	\label{fig:4}
\end{figure}

The next step is to recover the missing measurements on the driving electrodes. We shall follow the result in \cite{harrach2015interpolation} to obtain an approximation for these missing measurements using interpolation.

Now we are in a position to minimize the problem (\ref{eq:min_lin_res}) under the linear constraint (C1) or (C2). To do this, we need to clarify ${\it a_+, a_-},$ and $\beta_k$ in the linear constraints. After scaling, the reference conductivity is $\sigma_0=1$ S/m, and $D$ still denotes the Plexiglas rod with conductivity $\sigma=0$ S/m. Thus, $\gamma=1$, ${\it a}_-=\inf_D \gamma =1$ and $\beta_k$ is calculated using (\ref{betak}). In practice, there is no way to obtain the exact value of the matrix $V$ in (\ref{betak}). Indeed, what we know is just the measured data $V^\delta=U^\delta(\sigma)-U^\delta(\sigma_0)$, where $\delta$ denotes the noise level. When replacing $|V|$ by the noisy version $|V^\delta|$, it may happen that there is no $\alpha>0$ so that the matrix $|V^\delta|+ \alpha S_k$ is still positive semi-definite. Therefore, instead of using (\ref{betak}), we shall calculate $\beta_k$ from
\[\beta_k=\max\{\alpha\ge0\;:\;|V^\delta|+\alpha S_k \ge -\delta I\}.\]
Here, $I$ represents the identity matrix, and $\delta$ is chosen as the absolute value of the smallest eigenvalue of $V^\delta$. Notice that, in the presence of noise, $|V^\delta|+\delta I$ plays the role of the positive semi-definite matrix $|V|$. We shall follow the argument in \cite{harrach2015enhancing} to calculate $\beta_k$. Let $L$ be the lower triangular Cholesky decomposition matrix of $|V^\delta|+\delta I$, and let $\lambda_s(L^{-1}S_k(L^*)^{-1})$ be the smallest eigenvalue of the matrix $L^{-1}S_k(L^*)^{-1}$. Since $S_k$ is negative semi-definite, so is $L^{-1}S_k(L^*)^{-1}$. Thus, $\lambda_s(L^{-1}S_k(L^*)^{-1})\le 0$. Arguing in the same manner as in \cite{harrach2015enhancing}, we get
\[\beta_k=-\frac{1}{\lambda_s(L^{-1}S_k(L^*)^{-1})} \ge 0.\]

The minimizer of (\ref{eq:min_lin_res}) is then obtained using two different approaches: one employs \texttt{cvx} (Figure \ref{fig:4a}), a package for specifying and solving convex programs \cite{cvx,gb08}, the other (Figure \ref{fig:4b}) uses the \texttt{MATLAB}  built-in function \texttt{quadprog} (\texttt{trust-region-reflective} Algorithm). We also show the reconstructed result using the built-in function \texttt{inv$\_$solve} of EIDORS \cite{adler1996electrical} (Figure \ref{fig:4c}) with the default regularization parameter $0.03$ and with GREIT algorithm \cite{adler2009greit} (Figure \ref{fig:4d}) to see that scaling the measured data with the best scaling factor will improve a little bit the reconstructed image. Notice that reconstructed images are highly affected by the choice of the minimization algorithms, and we will see from Figure \ref{fig:4} that the images obtained by \texttt{cvx} has less artifacts than the others.

It is worth to emphasize that although each EIDORS model is assigned to a default regularization parameter, when using the EIDORS built-in function \texttt{inv$\_$solve} \cite{adler1996electrical}, in order to obtain a good reconstruction (Figure \ref{fig:3}) one has to manually choose a regularization parameter, whilst the regularization parameters ${\it a_-}$ and $\beta_k$ in our method are known a-priori provided the information of the conductivity $\sigma$ and the reference conductivity $\sigma_0$ exists. Besides, if we manually choose the parameters $\min({\it a_-,\beta_k})$, we even get much better reconstructed images (Figure \ref{fig:5}).

\begin{figure}[htp]
	\begin{center}
		\subfigure[\texttt{$\min(2,\beta_k)$}]{\label{fig:5a}\includegraphics[scale=0.5]{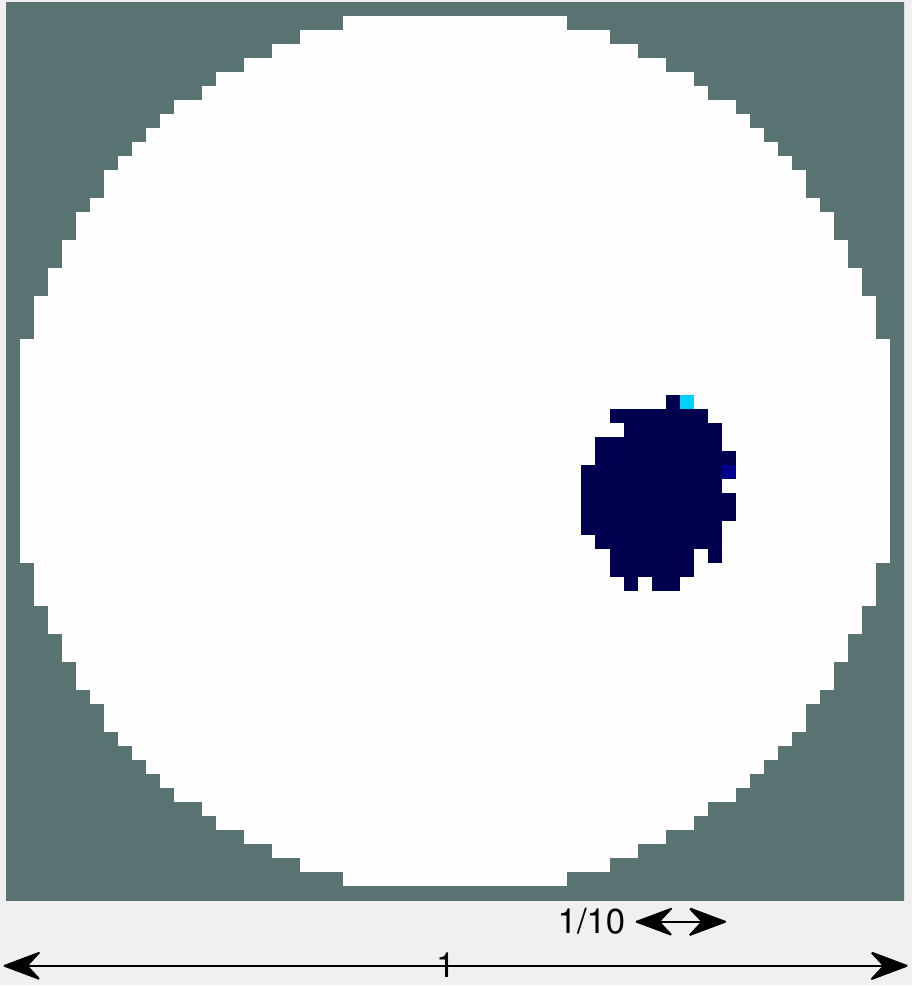}}
		\subfigure[\texttt{$\min(3,\beta_k)$}]{\label{fig:5b}\includegraphics[scale=0.5]{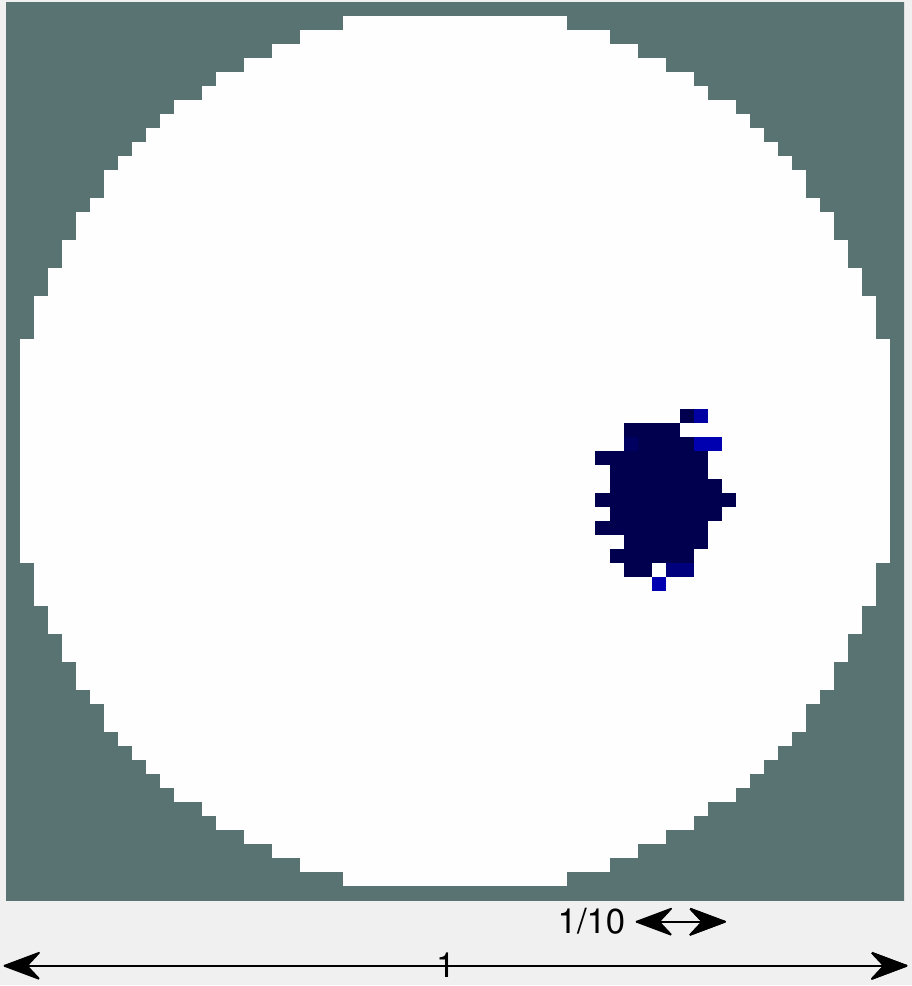}}
		\subfigure[\texttt{$\min(4,\beta_k)$}]{\label{fig:5c}\includegraphics[scale=0.5]{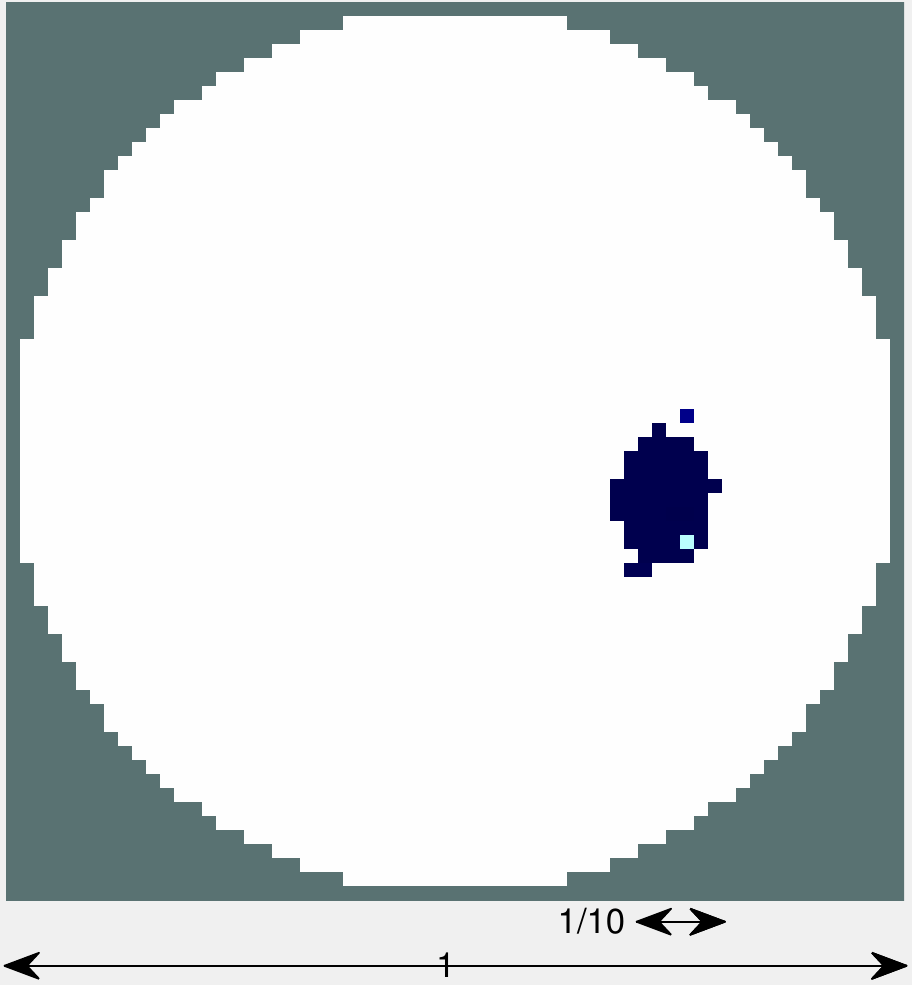}}
	\end{center}
	\caption{Reconstructed images for the $9$th-inhomogeneous voltage measurements with monotonicity-based algorithm and different choices of lower constraint.}
	\label{fig:5}
\end{figure}

\begin{table}
	\caption{\label{tab:1} Runtime of pictures in Figure \ref{fig:4}}
	\begin{tabular*}{\textwidth}{@{}l*{15}{@{\extracolsep{0pt plus
						12pt}}l}}
		\br
		Algorithm  & Runtime (second) \\
		\mr
		\texttt{cvx}  & 839.3892 \\
		\texttt{quadprog (trust-region-reflective)}   &  5.4467\\
		EIDORS (\texttt{inv$\_$solve})   & 0.0231 \\
		GREIT & 0.0120 \\
		\br
	\end{tabular*}
\end{table}


Last but not least, our new method proves its advantage when there are more than one inclusions (Figure \ref{fig:6}). 

\begin{figure}[htp]
	\begin{center}
		\subfigure[]{\label{fig:6a}\includegraphics[scale=0.38]{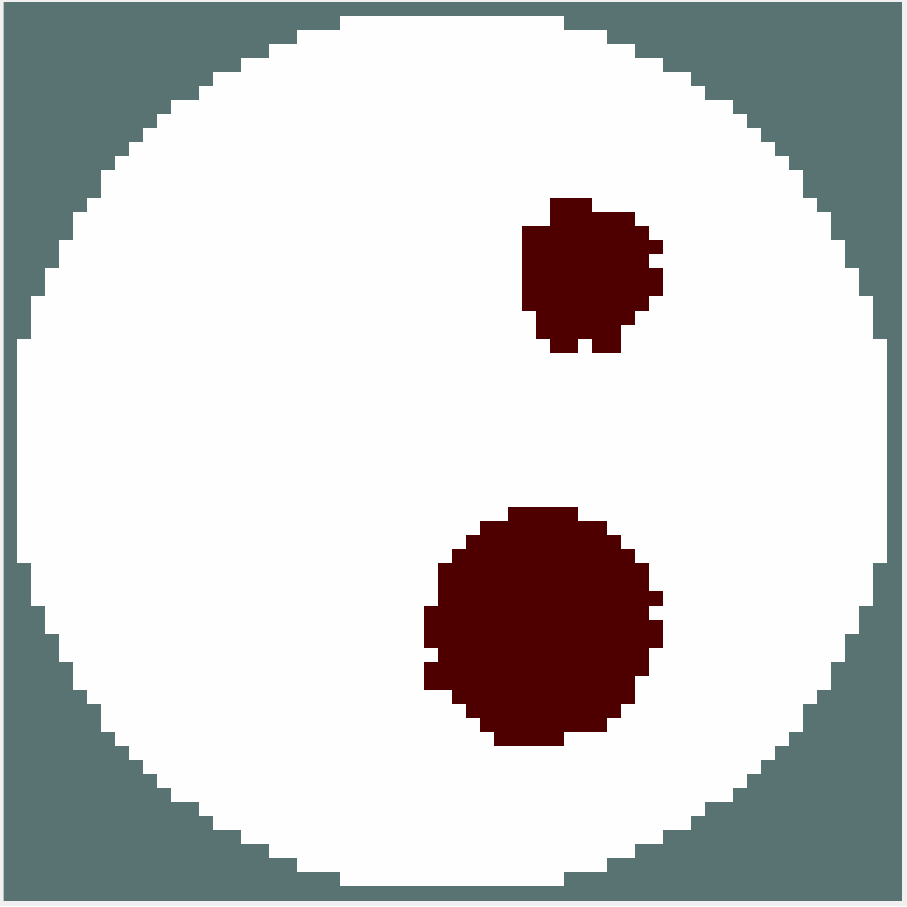}}
		\subfigure[]{\label{fig:6b}\includegraphics[scale=0.38]{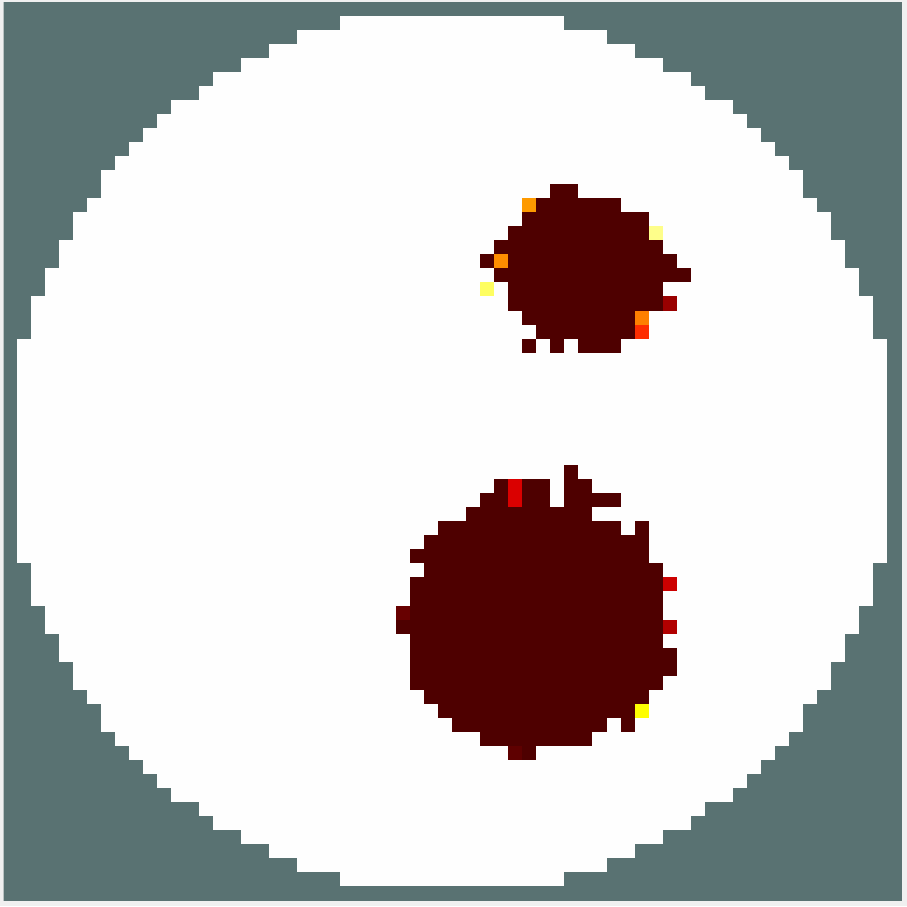}}
		\subfigure[]{\label{fig:6c}\includegraphics[scale=0.38]{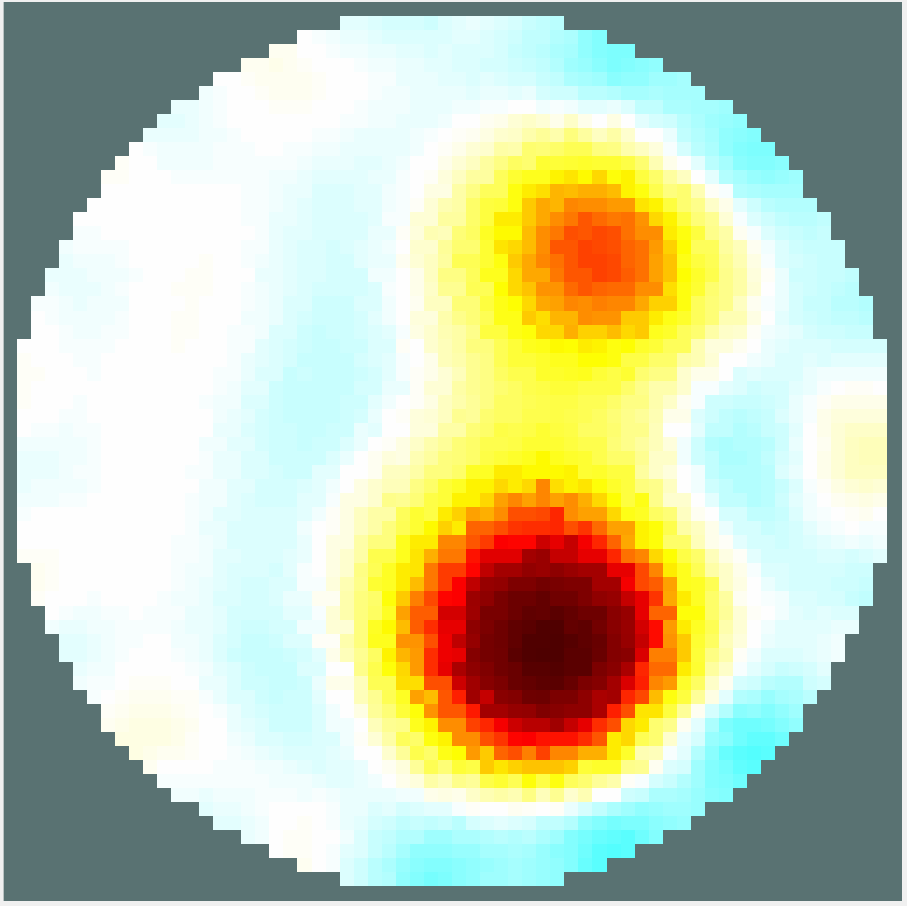}}
		\subfigure[]{\label{fig:6d}\includegraphics[scale=0.38]{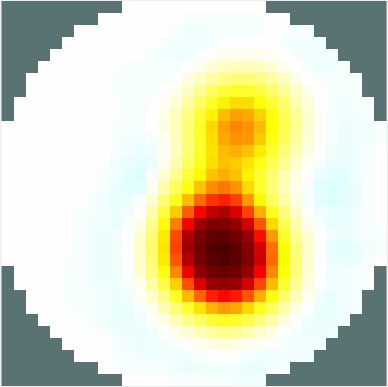}}\\
		\subfigure[]{\label{fig:6a}\includegraphics[scale=0.38]{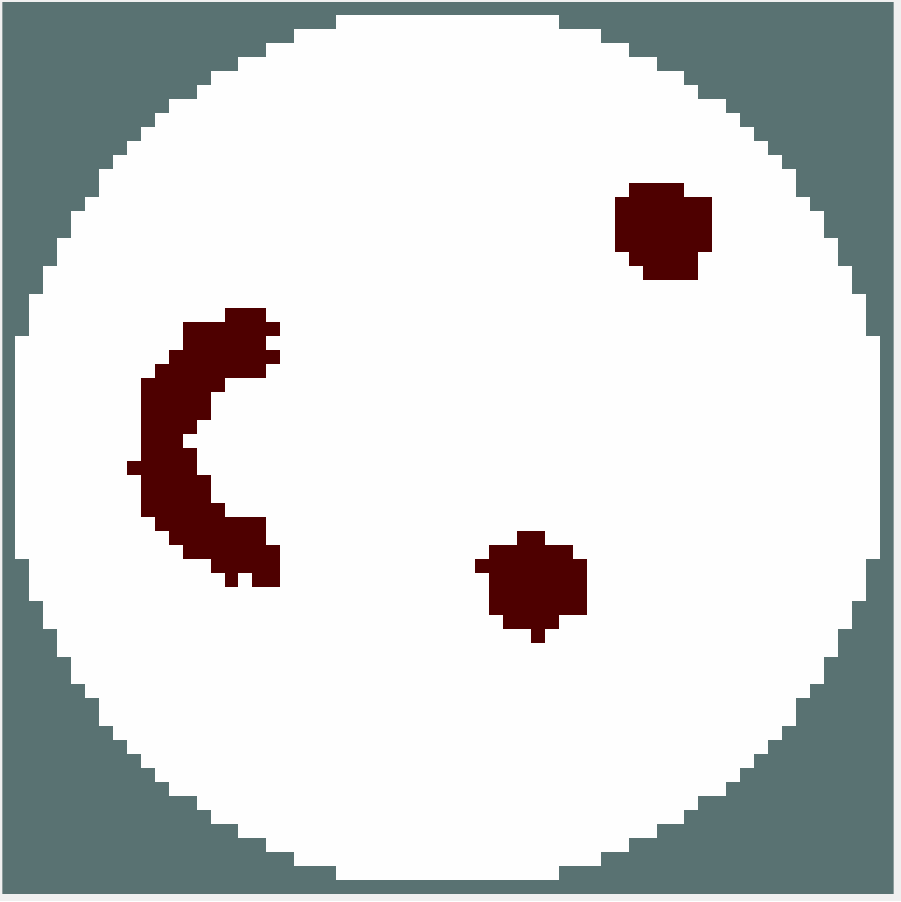}}
			\subfigure[]{\label{fig:6b}\includegraphics[scale=0.38]{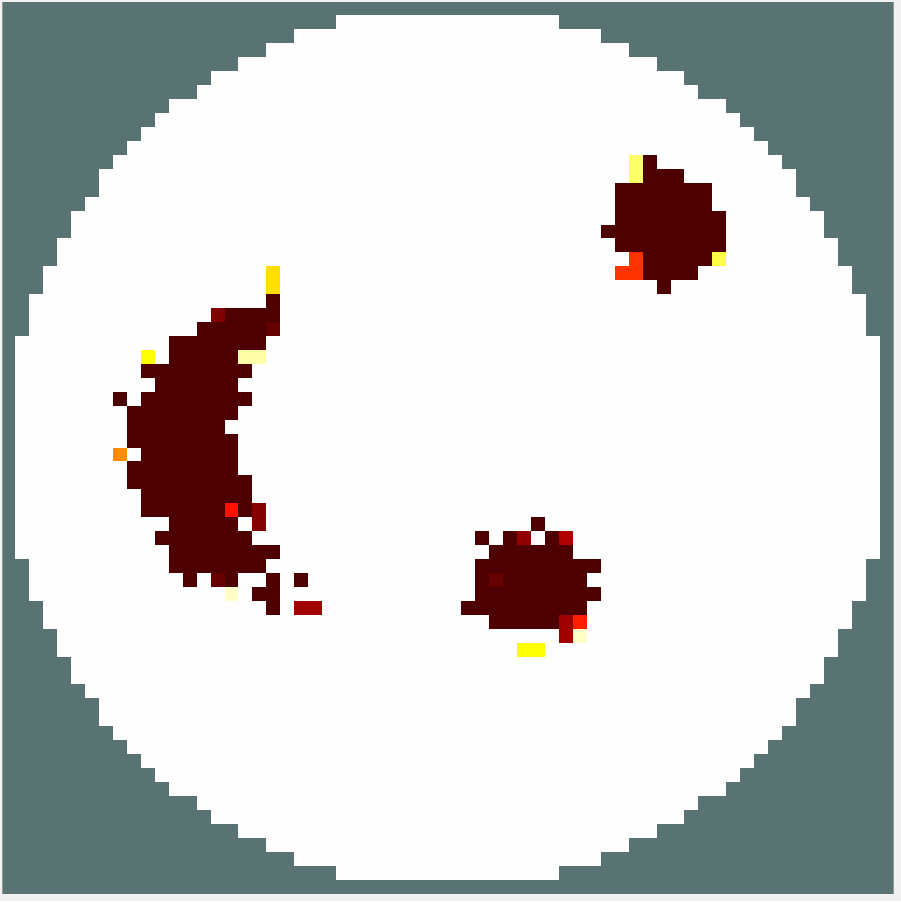}}
			\subfigure[]{\label{fig:6c}\includegraphics[scale=0.38]{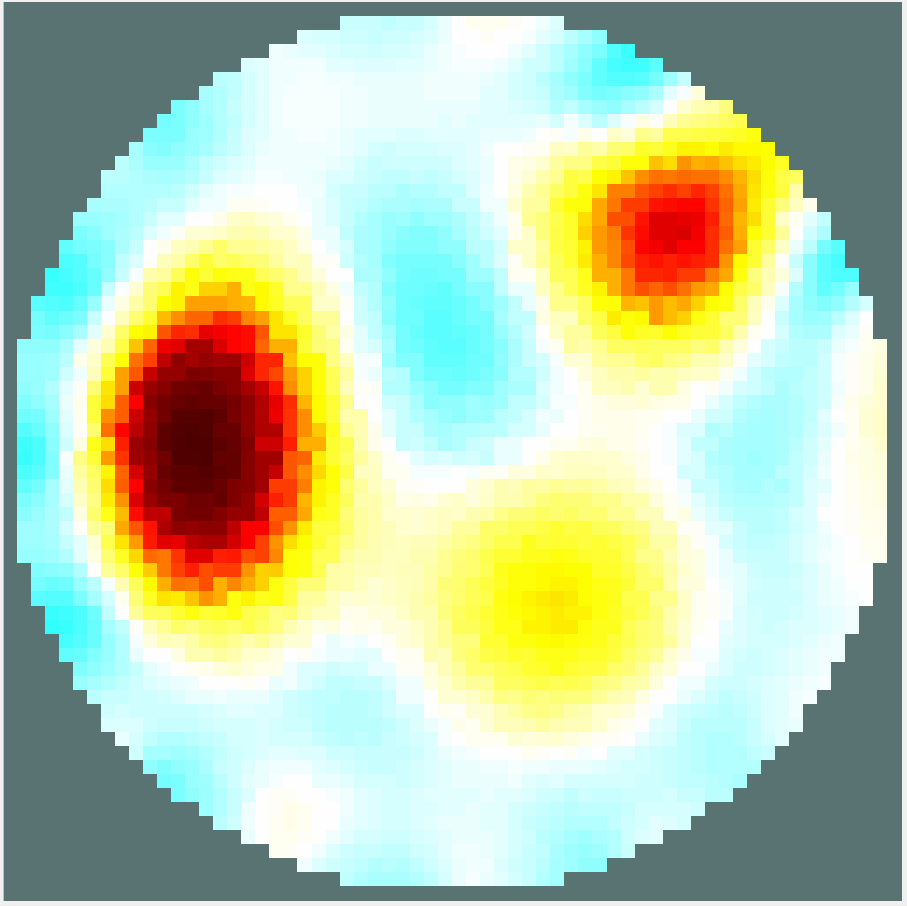}}
			\subfigure[]{\label{fig:6d}\includegraphics[scale=0.38]{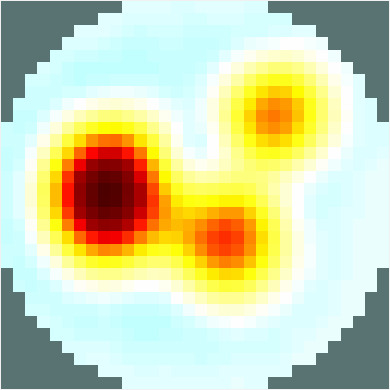}}
	\end{center}
	\caption{Reconstructed images for simulated data with $0.1\%$ noise. (From left to right) First column: True conductivity change, Second column: our new method (with \texttt{cvx}), Third column: EIDORS (\texttt{inv}$\_$\texttt{solve}), Last column: GREIT}
	\label{fig:6}
\end{figure}


\section{Conclusions}\label{Sec:con}
In this paper, we have presented a new algorithm to reconstruct images in EIT in the real electrode setting. Numerical results show that this new algorithm helps to reduce the ringing artifacts in the reconstructed images. Global convergence result of this algorithm has been proved in \cite{harrach2015enhancing} for the Continuum Model. In future works, we shall prove global convergence result for the Shunt Model setting as well as reduce the runtime to fit with real-time applications.


\ack The authors thank Professor Eung Je Woo's EIT research group for the \texttt{iirc} phantom data set. MNM thanks the Academy of Finland (Finnish Center of Excellence in Inverse Problems Research) for financial support of the project number 273979. During part of the preparation of this work, MNM worked at the Department of Mathematics of the Goethe University Frankfurt, Germany.

\section*{References}

\end{document}